\documentclass[a4paper]{article}
\usepackage{fullpage}
\usepackage{amsmath}
\usepackage{amssymb}
\usepackage{amsthm}
\usepackage{graphicx}
\usepackage{bbm}
\usepackage{enumerate}
\usepackage{microtype}
\usepackage{tocbibind}
\usepackage{xcolor}
\usepackage[none]{hyphenat}
\usepackage[colorlinks=true,urlcolor=blue,linkcolor=blue,plainpages=false,pdfpagelabels]{hyperref}
\allowdisplaybreaks

\newtheorem{theorem}{Theorem}[section]

\newtheorem{lemma}[theorem]{Lemma}

\newtheorem{definition}[theorem]{Definition}

\newcommand{\vp}{\varphi}

\newcommand{\De}[1]{\ensuremath{{\Delta_{#1}}}}

\Urlmuskip=0mu  plus 5mu

\title{Normal forms and representable functions in Moisil logic}
\author{Andrei Sipo\c s${}^{a,b,c}$\\[2mm]
\footnotesize ${}^a$Research Center for Logic, Optimization and Security (LOS), Department of Computer Science,\\
\footnotesize Faculty of Mathematics and Computer Science, University of Bucharest,\\
\footnotesize Academiei 14, 010014 Bucharest, Romania\\[1mm]
\footnotesize ${}^b$Simion Stoilow Institute of Mathematics of the Romanian Academy,\\
\footnotesize Calea Grivi\c tei 21, 010702 Bucharest, Romania\\[1mm]
\footnotesize ${}^c$Institute for Logic and Data Science,\\
\footnotesize Popa Tatu 18, 010805 Bucharest, Romania\\[2mm]
\footnotesize Email: andrei.sipos@fmi.unibuc.ro\\
}
\date{}

\begin{document}

\maketitle

\begin{center}
{\em Dedicated to Professor George Georgescu on the occasion of his 77th birthday}
\end{center}

\vspace*{2mm}

\begin{abstract}
In this note, we determine, by a disjunctive normal form theorem, which functions on the standard $n$-nuanced \L ukasiewicz-Moisil algebra are representable by formulas and we show how this result may help in establishing the structure of the free algebras in this class.

\noindent {\em Mathematics Subject Classification 2020}: 03G25, 03B50, 06D30.

\noindent {\em Keywords:} \L ukasiewicz-Moisil algebras, disjunctive normal form, free algebras, many-valued logic.
\end{abstract}

\section{Introduction}

In the 1920s, Jan \L ukasiewicz introduced a three-valued logic \cite{Luk20} which he later generalized to $n$- and $\infty$-valued logics which now bear his name. Grigore C. Moisil was the first to attempt an algebrization of this kind of logic, when, in 1941, he introduced what he called $3$- and $4$-valued {\em \L ukasiewicz algebras} -- nowadays called {\em \L ukasiewicz-Moisil algebras} or simply {\em Moisil algebras} -- and later generalized them to the $n$-valued \cite{Moisil41} and the $\infty$-valued case (see \cite{Moisil1972}). In 1956, Alan Rose (in personal communication to R. Cignoli, see \cite[p. 2]{Cig70}) showed that this class of algebras is inadequate for \L ukasiewicz $n$-valued logic, since, for any $n \geq 4$, the set
$$\left\{0, \frac1n, \frac{n-1}n, 1\right\}$$
is a Moisil subalgebra of the standard $(n+1)$-valued Moisil algebra, which has the underlying set
$$L_{n+1}:=\left\{0,\frac1n,\ldots,\frac{n-1}n, 1\right\},$$
but is not closed under the \L ukasiewicz implication, which may be defined, for any $x$, $y \in [0,1]$, by
$$x \to y := \min(1,1-x+y),$$
since
$$\frac{n-1}n \to \frac1n = \frac2n.$$
An alternative was devised by C. C. Chang, who introduced in 1958 \cite{Chang1958} the now well-known class of {\em MV-algebras}. However, one can argue that \L ukasiewicz-Moisil algebras may still be considered algebras of logic, albeit for a different one, which is nowadays dubbed {\em Moisil logic}. A relatively short (but somewhat dated) introduction to \L ukasiewicz-Moisil algebras is \cite{Cig70}, while an exhaustive monograph from the early 1990s is \cite{BFGR}. More recent developments may be found in \cite{IL-det,DDILR,DDILG}.

The result presented above raises the question: for an $r \geq 1$, which $r$-ary functions on the standard Moisil algebra are representable by Moisil formulas? This question is answered by the main result, Theorem~\ref{nform}. In proving this theorem, we shall essentially provide a `disjunctive normal form' of sorts for formulas in Moisil logic. Using this result, we then study in a concrete way the structure of free Moisil algebras.

\section{Main results}

We shall fix a natural number $n \geq 2$.

\begin{definition}
A {\bf De Morgan algebra} is a tuple
$$\mathcal{L} = (L,\vee,\wedge,N,0,1)$$
such that $(L,\vee,\wedge,0,1)$ is a bounded distributive lattice and, for any $x$, $y\in L$,
\begin{enumerate}[(i)]
\item $NNx=x$;
\item $N(x \vee y) = Nx \wedge Ny$.
\end{enumerate}
\end{definition}

\begin{definition}
An {\bf $n$-nuanced} or {\bf $(n+1)$-valued Moisil algebra}, also called an {\bf $LM_n$-algebra}, is a tuple
$$\mathcal{L} = (L,\vee,\wedge,N,0,1,\De{1},\ldots,\De{n})$$
such that $(L,\vee,\wedge,N,0,1)$ is a De Morgan algebra and $\De{1},\ldots,\De{n}$ are unary operations on $L$ (called {\bf nuances} or {\bf Chrysippian endomorphisms}) such that, for any $i$, $j\in\{1,\ldots,n\}$ and any $x$, $y\in L$,
\begin{enumerate}[(i)]
\item $\De{i}(x\vee y) = \De{i}(x) \vee \De{i}(y)$;
\item $\De{i}(x) \vee N \De{i}(x) = 1$;
\item $\De{i} \De{j}(x) = \De{j}(x)$;
\item $\De{i} N x = N \De{n+1-i}(x)$;
\item if $i \leq j$, then $\De{i}(x) \leq \De{j}(x)$;
\item if, for all $k\in\{1,\ldots,n\}$, $\De{k}(x) = \De{k}(y)$, then $x = y$.
\end{enumerate}
\end{definition}

The standard $LM_n$-algebra, denoted by $\mathcal{L}_n$, has 
$$L_{n+1}=\left\{0,\frac1n,\ldots,\frac{n-1}n, 1\right\}$$
as its underlying set (as mentioned in the Introduction), its De Morgan algebra structure is the obvious one, and for all $i \in \{1,\ldots,n\}$, $j \in \{0,\ldots,n\}$, we have that
$$\Delta_i\left(\frac{j}{n}\right) = \begin{cases} 1, &\mbox{if  $i+j \geq n+1$,} \\ 
0, & \mbox{otherwise}. \end{cases}$$

{\it Moisil's representation theorem} (see, e.g., \cite[p. 25]{Cig70}) shows, in particular, that for any $LM_n$-algebra $A$ and any $a$, $b \in A$, there is a morphism $h : A \to \mathcal{L}_n$ such that $h(a)\neq h(b)$.

By property (v) in the definition, the nuances are progressively `contained' each one in the next, formalizing the intuitition that, if $\vp$ is a formula, then $\Delta_i\vp$ should `mean' that $\vp$ has the `truth value' greater or equal to $(n-1-i)/n$. A natural question that we may consider is whether we replace these operations by `independent' nuances. Such \emph{mutually exclusive nuances} $J_0,\ldots,J_n$ were introduced in \cite{Cig82} and later used in \cite{IL-det,DDILR} in order to obtain an alternative formulation and equational axiomatization of Moisil algebras. They are expressible over the signature of $LM_n$-algebras, as they are defined, for any $x$ in an algebra and any $i \in \{1,\ldots,n-1\}$, by
$$J_i(x) := \Delta_{n-i+1}(x) \wedge N\Delta_{n-i}(x),$$
and by $J_n:=\Delta_1$ and $J_0(x):=N \Delta_n(x)$. These $n+1$ terms have the following property when instantiated in $\mathcal{L}_n$: for all $i$, $j \in \{0,\ldots,n\}$, we have, denoting by $\delta_{ij}$ the Kronecker delta, that
\begin{equation}\label{j-eq}
J_i\left(\frac{j}{n}\right) = \delta_{ij}.
\end{equation}

We shall also denote, for any set $X$, by $T_n(X)$ the term algebra with variables from $X$ over the signature of $LM_n$-algebras. Note that this is not a $LM_n$-algebra, though it is a free algebra in the larger category of that signature.

\begin{definition}
Let $r\geq 1$ and $f : L_{n+1}^r \to L_{n+1}$. We say that $t \in T_n(\{x_1,\ldots,x_r\})$ is a {\bf representing term} for $f$ if for each morphism $\vp : T_n(\{x_1,\ldots,x_r\}) \to \mathcal{L}_n$ we have that
$$f(\vp(x_1),\ldots,\vp(x_r))=\vp(t).$$
We say that $f$ is {\bf Moisil representable} if it has a representing term.
\end{definition}

We may now present our main result.

\begin{theorem}\label{nform}
Let $r \geq 1$ and $f : L_{n+1}^r \to L_{n+1}$. TFAE:
\begin{enumerate}[(a)]
 \item $f$ is Moisil representable;
 \item for any $a_1,\ldots,a_r \in L_{n+1}$ we have that $f(a_1,\ldots,a_r) \in \{0,1,a_1,\ldots,a_r,1-a_1,\ldots,1-a_r\}$.
\end{enumerate}
\end{theorem}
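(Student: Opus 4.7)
I would prove the two implications separately. The harder direction is (b) $\Rightarrow$ (a), for which I would build a representing term for $f$ as a disjunctive normal form indexed by the input tuples, using the mutually exclusive nuances $J_i$ as characteristic terms.

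For (a) $\Rightarrow$ (b), I would show that the subalgebra of $\mathcal{L}_n$ generated by $\{a_1, \ldots, a_r\}$ is contained in $S := \{0, 1, a_1, \ldots, a_r, 1-a_1, \ldots, 1-a_r\}$. Since $L_n$ is a chain, $\vee$ and $\wedge$ act as $\max$ and $\min$ and thus preserve $S$. The negation $N$ sends $j/n$ to $(n-j)/n$, so it swaps each $a_i$ with $1-a_i$ and the constants $0$ with $1$, again preserving $S$. Finally, the Chrysippian endomorphisms $\De{i}$ always take values in $\{0,1\} \subseteq S$. Hence $S$ is a subalgebra of $\mathcal{L}_n$ containing the generators, so if $\vp$ is the morphism sending $x_i \mapsto a_i$, then $f(a_1, \ldots, a_r) = \vp(t) \in S$.

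For (b) $\Rightarrow$ (a), I would use the identity $J_i(j/n) = \delta_{ij}$ recorded in equation~(\ref{j-eq}) to build, for each tuple $\vec{a} = (a_1, \ldots, a_r) \in L_n^r$, an indicator term
$$\chi_{\vec{a}} := \bigwedge_{j=1}^r J_{n a_j}(x_j),$$
whose interpretation at a point $\vec{b} \in L_n^r$ equals $1$ if $\vec{b} = \vec{a}$ and $0$ otherwise. By hypothesis, for each $\vec{a}$ the value $f(\vec{a})$ lies in $\{0, 1, a_1, \ldots, a_r, 1-a_1, \ldots, 1-a_r\}$, so I can pick a term $u_{\vec{a}} \in \{0, 1, x_1, \ldots, x_r, Nx_1, \ldots, Nx_r\}$ whose interpretation at $\vec{a}$ equals $f(\vec{a})$. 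Then
$$t := \bigvee_{\vec{a} \in L_n^r} \left( \chi_{\vec{a}} \wedge u_{\vec{a}} \right)$$
will be a representing term for $f$: at any $\vec{b}$, only the summand indexed by $\vec{a} = \vec{b}$ contributes nontrivially, and its contribution is exactly $f(\vec{b})$.

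The conceptual key is recognizing that the $J_i$ serve as characteristic functions of singletons in $L_n$, allowing the classical Boolean disjunctive normal form construction to be lifted to this many-valued setting; once that observation is in place, I do not expect any serious obstacle beyond the bookkeeping above.
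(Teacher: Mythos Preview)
Your proof is correct and follows essentially the same approach as the paper: both directions use that $\{0,1,a_1,\ldots,a_r,1-a_1,\ldots,1-a_r\}$ is a subalgebra for the forward implication, and both construct the same disjunctive normal form $\bigvee_{\vec a}\bigl(\bigwedge_j J_{na_j}(x_j)\bigr)\wedge u_{\vec a}$ for the converse. The only difference is cosmetic: you justify closure of $S$ under each operation explicitly (the paper merely asserts it), and you say ``pick $u_{\vec a}$'' where the paper writes out an explicit case-by-case selection function $s(a_1,\ldots,a_r,a)$.
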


\begin{proof}
We first prove `$(a)\Rightarrow(b)$'. Let $a_1,\ldots,a_r \in L_{n+1}$. Set $M:=\{0,1,a_1,\ldots,a_r,1-a_1,\ldots,1-a_r\}$. Observe that $M$ is a subalgebra of $\mathcal{L}_n$, and denote by $\mu$ the inclusion morphism. Let $\vp' : T_n(\{x_1,\ldots,x_r\}) \to M$ be the unique morphism such that for all $i$, $\vp'(x_i)=a_i$ and set $\vp := \mu \circ \vp'$. Let $t$ be a representing term for $f$. We have that
$$f(a_1,\ldots,a_r) = f(\vp(x_1),\ldots,\vp(x_r)) = \vp(t) = \mu(\vp'(t)) \in M.$$

We now prove `$(b)\Rightarrow(a)$'. For each $a_1,\ldots,a_r$, $a \in L_{n+1}$ such that $a \in \{0,1,a_1,\ldots,a_r,1-a_1,\ldots,1-a_r\}$, we define the following term belonging to $T_n(\{x_1,\ldots,x_r\})$:
\[
 s(a_1,\ldots,a_r,a) :=
  \begin{cases} 
      \hfill 1, \hfill &  \text{if } a=1,\\ 
      \hfill 0, \hfill &  \text{if } a=0,\\ 
      \hfill x_i, \hfill &  \text{if } a\notin\{0,1\} \text{ and } i:=\min\{j \mid a=a_j\},\\ 
      \hfill Nx_i, \hfill &  \text{if } a\notin\{0,1,a_1,\ldots,a_r\}\text{ and } i:=\min\{j \mid a=1-a_j\}.
  \end{cases}
\]
We define the term (where, for each $i$, $na_i$ denotes the product of $n$ and $a_i$, which is always a natural number $\leq n$)
$$t:= \bigvee_{(a_1,\ldots,a_r) \in L_{n+1}^r} \big( J_{na_1}(x_1) \wedge\ldots\wedge J_{na_r}(x_r) \wedge s(a_1,\ldots,a_r,f(a_1,\ldots,a_r))\big).$$
We now prove that $t$ is a representing term for $f$. Let $\vp : T_n(\{x_1,\ldots,x_r\}) \to \mathcal{L}_n$ be a morphism. Set for all $i$, $b_i := \vp(x_i)$ and then $b:=f(b_1,\ldots,b_r)$. What we must show is that $\vp(t)=b$.

Since $\vp$ is a morphism, we have that
$$\vp(t) = \bigvee_{(a_1,\ldots,a_r) \in L_{n+1}^r} \big( J_{na_1}(b_1) \wedge\ldots\wedge J_{na_r}(b_r) \wedge \vp(s(a_1,\ldots,a_r,f(a_1,\ldots,a_r)))\big),$$
a disjunction that, by \eqref{j-eq}, collapses to the term indexed by $(b_1,\ldots,b_r)$, i.e. we have that
$$\vp(t) = \vp(s(b_1,\ldots,b_r,b)).$$

We focus on the case where $b \notin \{0,1\}$ and there is an $i$ such that $i=\min\{j \mid b=b_j\}$, so $b=b_i$ (the other cases are treated similarly). Then $s(b_1,\ldots,b_r,b)=x_i$ and therefore
$$\vp(t) = \vp(x_i) = b_i = b$$
and we are done.
\end{proof}

As pointed out in the Introduction, the proof above gives a canonical representing term for each $f$, a `disjunctive normal form' of sorts. We shall use this result in order to study the structure of free $LM_n$-algebras. For that, denote, for any $r \geq 1$, by $R_{n,r}$ the set of all Moisil representable functions $f : L_{n+1}^r \to L_{n+1}$, considered as a $LM_n$-algebra with the operations defined componentwise. We have the following lemma.

\begin{lemma}
Let $r \geq 1$. Denote, for each $i \in \{1,\ldots,r\}$, by $p_i : L_{n+1}^r \to L_{n+1}$ the canonical projection on the $i$th argument. Then $R_{n,r}$ is freely generated by these projections.
\end{lemma}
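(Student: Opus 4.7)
The plan is to identify $R_{n,r}$ with the free $LM_n$-algebra on $r$ generators through a homomorphism sending the usual term generators to the projections $p_1,\ldots,p_r$. Before invoking any universal property, one should confirm that the lemma's setup makes sense, i.e.\ that $R_{n,r}$ is closed under the componentwise operations on $\mathcal{L}_n^{L_n^r}$: given representing terms $t_f$ and $t_g$ for $f,g \in R_{n,r}$, applying any $LM_n$-operation to $t_f$ and $t_g$ in $T_n(\{x_1,\ldots,x_r\})$ yields a representing term for the corresponding componentwise combination. Hence $R_{n,r}$ is an $LM_n$-subalgebra of $\mathcal{L}_n^{L_n^r}$ and contains the projections.

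Let $F_r$ denote the free $LM_n$-algebra on $\{x_1,\ldots,x_r\}$, obtained as the quotient of $T_n(\{x_1,\ldots,x_r\})$ by the congruence generated by the $LM_n$-axioms, and write $[t] \in F_r$ for the class of $t$. By the universal property of $F_r$, there is a unique $LM_n$-morphism $\pi : F_r \to R_{n,r}$ with $\pi([x_i]) = p_i$. It is enough to show that $\pi$ is an isomorphism, for then $R_{n,r}$ inherits the freeness of $F_r$ on $\{[x_1],\ldots,[x_r]\}$ with the generators $p_1,\ldots,p_r$.

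Surjectivity of $\pi$ is the content of Theorem~\ref{nform}: for $f \in R_{n,r}$, pick a representing term $t$, and observe that by the definition of a representing term, $\pi([t])$ sends any $(a_1,\ldots,a_r) \in L_n^r$ to the evaluation of $t$ in $\mathcal{L}_n$ under $x_i \mapsto a_i$, which is exactly $f(a_1,\ldots,a_r)$.

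Injectivity, which I expect to be the more delicate step, is where Moisil's representation theorem enters. Every morphism $h : F_r \to \mathcal{L}_n$ is uniquely determined by the tuple $(h([x_1]),\ldots,h([x_r])) \in L_n^r$, and one readily checks that $h([t]) = \pi([t])(h([x_1]),\ldots,h([x_r]))$ for any term $t$. Thus if $\pi([s]) = \pi([t])$, then $h([s]) = h([t])$ for every such $h$, so Moisil's representation theorem applied to $F_r$ forces $[s] = [t]$. The main obstacle is then simply the bookkeeping between the three algebras $T_n(\{x_1,\ldots,x_r\})$, $F_r$, and $R_{n,r}$, together with the identification of morphisms $F_r \to \mathcal{L}_n$ with points of $L_n^r$.
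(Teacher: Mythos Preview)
Your proposal is correct and follows essentially the same route as the paper: define the canonical morphism from the free $LM_n$-algebra on $r$ generators into $R_{n,r}$ sending generators to projections, obtain surjectivity from the existence of representing terms, and obtain injectivity from Moisil's representation theorem via the identity $h([t]) = \pi([t])(h([x_1]),\ldots,h([x_r]))$. The only differences are cosmetic (your $\pi$ is the paper's $\psi$, and you add the explicit check that $R_{n,r}$ is a subalgebra, which the paper leaves implicit).
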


We omit the proof, since the bulk of its argument is well known in universal algebra, since the only fact specific to $LM_n$-algebras that is used is that any algebra has enough morphisms into $\mathcal{L}_n$ to separate its elements (the standard algebra generates the whole variety, by Moisil's representation theorem).

Now, combining the two results above, we see that the free $LM_n$-algebra over $r$ generators can be most concretely described as the collection of all possible `truth tables' constructed using the rule in Theorem~\ref{nform}. Since that rule puts a constraint on each line in a table in an independent way (the constraint being that the result in the final column must be an element of the corresponding subalgebra), the free algebra is transparently isomorphic to a product:
$$F_n(r) \cong \prod_A A^{\alpha(r,A)},$$
where the product is taken over all subalgebras $A$ of $\mathcal{L}_n$ and $\alpha(r,A)$ is the number of all tuples $(a_1,\ldots,a_r)$ that generate $A$. We may refine this result a little. Consider, for simplicity, that $n$ is odd. Then, as shown e.g. in \cite{IL-det}, a subalgebra of $\mathcal{L}_n$ is completely determined by a choice of a subset of $\left\{\frac1n,\ldots,\frac{\frac{n-1}2}n\right\}$. For each $k \in \{1,\ldots,\frac{n+1}2\}$ the $(k-1)$-element subsets are therefore $\binom{\frac{n-1}2}{k-1}$ in number, and we denote the corresponding $2k$-element subalgebras of $\mathcal{L}_n$, for each $j \in \{1,\ldots,\binom{\frac{n-1}2}{k-1}\}$, by $A_{k,j}$. The relation above becomes:
$$F_n(r) \cong \prod_{k=1}^{\frac{n+1}2} \prod_{j=1}^{\binom{\frac{n-1}2}{k-1}} A_{k,j}^{\alpha(r,k)},$$
where $\alpha(r,k)$ is the number of all tuples $(a_1,\ldots,a_r)$ that generate a $2k$-element subalgebra, since that number does not depend on the specific such subalgebra (and not even on $n$). Cignoli arrives, albeit by a different, more abstract route, using the structure of finite $LM_n$-algebras, at the same result in \cite{Cig70}, where he derives the formula
$$\alpha(r,k) = 2^r \sum_{i=0}^{k-1} (-1)^i \binom{k-1}i (k-i)^r.$$
He then proceeds to obtain a similar result for the case of even $n$, using arguments a bit more intricate, but not qualitatively different. Our method, we think, has the advantage of giving a concrete feel for how one may work with such an algebra by providing a tool of a constructive flavour, namely the disjunctive normal form result obtained above.

\section{Acknowledgements}

I would like to thank Ioana Leu\c stean for originally telling me about this problem in January 2017. Even though the problem got solved almost immediately, the result was not made public for various reasons, a situation which this note aims to correct.

This work was supported by a grant of the Romanian National Authority for Scientific Research and Innovation, CNCS-UEFISCDI, project number PN-II-RU-TE-2014-4-0730.

\end{document}